\newtheorem*{thm-plain}{Theorem}
\newtheorem{thm}{Theorem}[section]
\newtheorem{lem}[thm]{Lemma}
\newtheorem{cor}[thm]{Corollary}
\newtheorem{prp}[thm]{Proposition}
\theoremstyle{definition}
\newtheorem{rem}[thm]{Remark}
\newtheorem{nte}[thm]{Note}
\newtheorem{exl}[thm]{Example}
\DeclareMathOperator{\Char}{Char}
\DeclareMathOperator{\PFD}{PFD}
\DeclareMathOperator{\Res}{Res}
\renewcommand{\mathbf}{\mathbb}
\numberwithin{equation}{section}
\begin{document}

\begin{abstract}
  We prove a closed character formula for the symmetric powers \( S^N V(\lambda) \)
  of a fixed irreducible representation \(V(\lambda)\) of a complex semi-simple Lie
  algebra \(\mathfrak{g}\) by means of partial fraction decomposition.
  The formula involves rational functions in rank of \(\mathfrak{g}\) many variables
  which are easier to determine than the weight multiplicities of \( S^N V(\lambda) \)
  themselves. We compute those rational functions in some interesting cases.
  Furthermore, we introduce a residue-type generating function for the weight
  multiplicities of \( S^N V(\lambda) \) and explain the connections between our 
  character formula, vector partition functions and iterated partial fraction
  decomposition.
\end{abstract}  

  \pagestyle{plain}
  \title{ A closed character formula for symmetric powers of irreducible representations}
  \author{Stavros Kousidis$^\dagger$}
  \address{\selectlanguage{ngerman}Stavros Kousidis, Mathematisches Institut, Universit\"at zu K\"oln, Weyertal 86-90, 50931 K\"oln, Germany}
  \email{\href{mailto:skousidi@math.uni-koeln.de}{skousidi@math.uni-koeln.de}}
  \thanks{$^\dagger$Supported by the Max Planck Institute for Mathematics and the Deutsche Forschungsgemeinschaft SFB/TR12 Symmetries and Universality in Mesoscopic Systems.}
  \maketitle
  \tableofcontents

  \pagenumbering{arabic}
  
  \section{Notation}
  \label{sec:notation}

  Let \( \mathfrak{g} \) be a complex semi-simple Lie algebra of rank \( r \).
  Fix a Borel \( \mathfrak{b} \) and a Cartan subalgebra \( \mathfrak{h} \)
  in \( \mathfrak{g} \) and let \( Q = \bigoplus_{i=1}^r \mathbb{Z} \alpha_i \)
  and \( X = \bigoplus_{i=1}^r \mathbb{Z} \omega_i \) be the corresponding root and
  weight lattice spanned by the simple roots and fundamental weights respectively.
  Let \( \alpha_1^{\vee},\ldots,\alpha_r^{\vee} \) be the simple coroots and \( W \)
  the Weyl group. An irreducible representation of \( \mathfrak{g} \) of highest weight
  \( \lambda \in X^+ \), where \( X^+ \) stands for all dominant weights, is denoted by
  \( V(\lambda) \). Its character will be written as \( \Char V(\lambda) \) and it is
  well-known that it is an element of \( \mathbb{Z}[X] \), the integral group ring
  associated to the weight lattice. Each generator \( e^{\mu} \in \mathbb{Z}[X] \)
  yields a function on \( \mathfrak{h}_{\mathbb{R}} \), the real span of
  the simple coroots, by \( x \mapsto e^{\langle \mu,x \rangle} \). In this sense we have
  the associated Fourier series of the character of \( V(\lambda) \) as a function of
  \( \mathfrak{h}_{\mathbb{R}} \), i.e. \( \Char V(\lambda)(ix) = \sum_{\mu \in X}
  m_{\mu} e^{ i \langle \mu , x \rangle } \). To simplify notation in what follows we
  define \( q = e^{i\langle \cdot , x \rangle} \), i.e. \( q^{\mu} = e^{i\langle \mu , x \rangle} \).
  Then, with respect to the coordinate system \( \{ \alpha_1^{\vee}, \ldots , \alpha_r^{\vee} \} \)
  of \( \mathfrak{h}_{\mathbb{R}} \) we have \( q = (q_1,\ldots,q_r) \) with 
  \( q_i = e^{i \langle \cdot , x_i \alpha_i^{\vee} \rangle} \) and for
  \( \mu = c_1 \omega_1 + \ldots + c_r \omega_r \)
  \begin{align*}
    q^{\mu}
      = (q_1,\ldots,q_r)^{(c_1,\ldots,c_r)}
      = q_1^{c_1} \cdots q_r^{c_r}
      \in \mathbb{Z}[q_1^{\pm 1}, \ldots , q_r^{\pm 1}]
      .
  \end{align*} 
  Note, whenever we write \( \mathbb{N} \) we mean the non-negative integers
  \( \{0,1,2,\ldots \} \).

  \section{Introduction and the Main Theorem}
  \label{sec:introduction}
  
  Let \( m_{\lambda,N} : X \rightarrow \mathbb{N} \) be the weight multiplicity function
  for the \(N\)-th symmetric power of a fixed irreducible representation \(V(\lambda)\)
  of \( \mathfrak{g} \), i.e.
  \begin{align*}
    \Char S^N V(\lambda)
      = \sum\limits_{\nu \in X} m_{\lambda,N}(\nu) e^{\nu}
      \in \mathbb{Z}[X]
      .
  \end{align*}
  Then, we have the following combinatorial identity
  \begin{align*}
    m_{\lambda,N}(\nu)
      = \sum\limits_{\genfrac{}{}{0pt}{}{\{\nu_1,\ldots,\nu_N\} \subset X}{\nu_1 + \ldots + \nu_N = \nu}}
          m_{\lambda,1}(\nu_1) \cdots m_{\lambda,1}(\nu_N)
	.
  \end{align*}  
  In general it is a non-trivial problem to determine \(m_{\lambda,N}\). That is, to
  establish a formula depending on \(N\) that counts the unordered pairs
  \( \{\nu_1,\ldots,\nu_N\} \) subject to the restriction \( \nu_1 + \ldots + \nu_N = \nu \). \\
  
  We will instead identify the Fourier series associated to the character of \( S^N V(\lambda) \)
  as an element of \( \mathbb{C}(q_1,\ldots,q_r)[X] \)
  (see section \ref{sec:notation} for the notation). The key point is that this identification
  involves data (apart from terms in \(N\)) which is easier to determine than the function
  \(m_{\lambda,N}\) and depends only on the fixed representation \(V(\lambda)\). 
  Starting point will be Molien's formula (see \cite[Chapter 9, §4.3]{MR2265844}) which
  identifies the graded character of the symmetric algebra of \(V(\lambda)\) as a product of
  geometric series. We will state this result here for a quick reference. 

  \begin{lem}[\protect{compare \cite[Chapter 9, §4.3]{MR2265844}}]
    \label{lem:Molien-formula}
    \begin{align*}
      \Char S V(\lambda) 
        =
        \sum\limits_{N=0}^{\infty} z^N \Char S^N V(\lambda)
	=
	\prod\limits_{\nu \in X}
	\frac{1}{(1-e^{\nu}z)^{\dim V(\lambda)_{\nu}}}
    \end{align*}
  \end{lem}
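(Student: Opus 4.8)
The plan is to reduce Molien's formula to the elementary computation of the graded character of a polynomial algebra, exploiting the fact that the symmetric algebra of a vector space is the polynomial algebra on any basis. First I would fix a basis \( \{v_1,\dots,v_d\} \) of \( V(\lambda) \) consisting of weight vectors, where \( d = \dim V(\lambda) \); say \( v_j \) has weight \( \mu_j \), so that for each \( \nu \in X \) exactly \( \dim V(\lambda)_\nu \) of the \( v_j \) carry weight \( \nu \). Since \( S V(\lambda) = \bigoplus_{N \geq 0} S^N V(\lambda) \) is by definition the symmetric algebra of \( V(\lambda) \), it is canonically the polynomial algebra \( \mathbb{C}[v_1,\dots,v_d] \), with vector-space basis the monomials \( v_1^{a_1} \cdots v_d^{a_d} \) for \( (a_1,\dots,a_d) \in \mathbb{N}^d \). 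Because \( \mathfrak{h} \) acts on \( S V(\lambda) \) by derivations, the Leibniz rule shows that each such monomial is a simultaneous eigenvector, hence a weight vector of weight \( \sum_{j=1}^d a_j \mu_j \), and its symmetric degree is \( \sum_{j=1}^d a_j \).

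With this description the graded character becomes a sum over the monomial basis, which I would evaluate by splitting off each variable separately:
\begin{align*}
  \sum_{N=0}^\infty z^N \Char S^N V(\lambda)
    &= \sum_{(a_1,\dots,a_d) \in \mathbb{N}^d} z^{a_1 + \cdots + a_d}\, e^{a_1 \mu_1 + \cdots + a_d \mu_d} \\
    &= \prod_{j=1}^d \Bigl( \sum_{a=0}^\infty (e^{\mu_j} z)^a \Bigr)
     = \prod_{j=1}^d \frac{1}{1 - e^{\mu_j} z}.
\end{align*}
The one genuine manipulation here is the factorization of the sum over \( \mathbb{N}^d \) into a product of geometric series, which rests on the identity \( e^{\sum_j a_j \mu_j} = \prod_j (e^{\mu_j})^{a_j} \). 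To finish, I would group the \( d \) factors according to the weight \( \mu_j \) they carry; since the weight \( \nu \) occurs exactly \( \dim V(\lambda)_\nu \) times among the \( \mu_j \), the product collapses to \( \prod_{\nu \in X} (1 - e^{\nu} z)^{-\dim V(\lambda)_\nu} \), which is the asserted formula.

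No step is genuinely hard; the only points requiring care are two formal ones. First, the identities should be read in \( \mathbb{Z}[X][[z]] \), the ring of formal power series in \( z \) with coefficients in the group ring: the left-hand side is honestly graded by \( N \), and each geometric series \( \sum_a (e^{\mu_j} z)^a \) converges there simply because it is a power series in \( z \), so no analytic issue arises. Second, although the final product is indexed by the whole lattice \( X \), only finitely many factors are nontrivial, as \( V(\lambda) \) is finite dimensional and hence \( \dim V(\lambda)_\nu = 0 \) for all but finitely many \( \nu \); the infinite product is therefore really finite. The main thing left to verify is thus just that the Cartan action on a monomial is diagonal with the stated weight, which is immediate from the derivation property, and that the multi-index sum factors as indicated, which is the standard expansion of a product of geometric series.
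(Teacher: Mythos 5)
Your proof is correct. The paper does not prove this lemma at all---it states it with a pointer to \cite[Chapter 9, \S 4.3]{MR2265844}---and your argument (weight basis of \( V(\lambda) \), monomials as weight vectors via the derivation action of \( \mathfrak{h} \), factorization of the multi-index sum into geometric series in \( \mathbb{Z}[X][[z]] \), then regrouping the \( d \) factors by weight) is exactly the standard derivation of Molien's formula that the cited reference carries out, including the right formal setting and the observation that the product over \( X \) is effectively finite.
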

  
  \noindent Our main result will be Theorem \ref{thm:character-formula} in Section
  \ref{sec:closed-character-formula}. That is,

  \begin{thm-plain}[Character formula]
    Let \( \mathfrak{g} \) be a semi-simple complex Lie algebra of rank \(r\) and \(V(\lambda)\)
    a fixed irreducible representation of \(\mathfrak{g}\) with weight space decomposition
    \( V(\lambda) = \bigoplus_{\nu \in X} V(\lambda)_{\nu} \) and weight multiplicity function
    \( m_{\lambda} : X \rightarrow \mathbb{N}\). Then, with
    \( q = e^{i\langle \cdot , x \rangle} = (q_1 , \ldots , q_r) \) as above, we have
    \begin{align*}
      \Char S^N V(\lambda)(ix)
        =
	\sum\limits_{\nu \in X} q^{N\nu}
	\sum\limits_{k=1}^{m_{\lambda}(\nu)}
        A_{\nu,k}(q) \cdot p_k (N)
	\in \mathbb{C}(q_1,\ldots,q_r)[X]
    \end{align*}
    with rational functions \( A_{\nu,k}(q) \in \mathbb{C}(q_1,\ldots,q_r) \)
    and polynomials \( p_k (N) \in \mathbb{Q}[N] \) of degree \(k-1\) given by 
    \begin{align*}
      p_k(N) = \binom{N+k-1}{N}
      .
    \end{align*}	    
    Furthermore, for a weight
    \( \mu \in X \) and \( l = 0,\ldots,m_{\lambda}(\mu)-1 \) we have
    \begin{align*}
      A_{\mu,m_{\lambda}(\mu) - l}(q)
        =
	\frac{(-1)^l}{l! q^{l \mu}} \cdot \frac{d^l}{(dz)^l}
	\left[ \prod\limits_{\nu \in X \setminus \mu}
	\frac{1}{(1-q^{\nu}z)^{m_{\lambda}(\nu)}} \right]_{z = q^{-\mu}} 
	.
    \end{align*}	    
  \end{thm-plain}

  \noindent We will apply this theorem to prove character formulas in some interesting
  cases, involving in particular concrete expressions for the rational functions.\\
  To the authors' knowledge there is no formula of such type known so far although the
  derivation of the Main Theorem is based on simple
  observations\footnote{and on ``Mickey Mouse''-analysis as Alan Huckleberry has put it
  to me in private communication}.  \\

  In Section \ref{sec:residue-type-generating-function-multiplicities}, Proposition
  \ref{prp:genf-multiplicities} we will prove an integral expression for the generating
  function associated to the weight multiplicity functions \(m_{N,\lambda} \) 
  (evaluated at a fixed weight \( \mu \in X \)) of the sequence of representations 
  \( S^N V(\lambda) \).
  Based on this identity and our Main Theorem above we will explain the nature of this
  generating function and in particular why it is of residue-type.\\

  Section \ref{sec:vector-partition-functions} will be a short sketch of the connections
  between the results of section \ref{sec:closed-character-formula},
  \ref{sec:residue-type-generating-function-multiplicities} and vector partition functions
  and iterated partial fraction decomposition (see e.g. \cite{MR2096740} and \cite{arx09121131}).\\

  Section \ref{sec:Weyl-group-orbits} comments on an important continuation of the present
  discussion. That is, the character formula established in the Main Theorem can be split
  into individual parts belonging to the Weyl group orbits of dominant weights.
  The question is what can be expected from the iterated partial fraction decomposition of those
  individual terms. We illustrate a possible answer by an example. A detailed treatment
  will appear in the full version of this extended abstract.

  \section{A closed character formula for symmetric powers}
  \label{sec:closed-character-formula}

  We will derive a closed character formula for the representation \( S^N V(\lambda) \)
  in terms of a basis of weight vectors of the irreducible representation \( V(\lambda) \)
  with weight multiplicity function \( m_\lambda \) and the parameter \(N\).
  The term ``closed'' will be explained in detail in Note \ref{nte:closed-character-formula}
  once we have proven our main result, Theorem \ref{thm:character-formula}.
  The method we use is the partial fraction decomposition. That is, consider the identity of
  Lemma \ref{lem:Molien-formula} for \( q = e^{ i \langle \cdot , x \rangle } \),
  \( x \in \mathfrak{h}_{\mathbb{R}} \),
  \begin{align}
    \label{eq:graded-character-symmetric}
    \sum\limits_{N=0}^{\infty} z^N \Char S^NV(\lambda)(ix)
      =
      \prod\limits_{\nu \in X}
      \frac{1}{(1-q^{\nu}z)^{m_{\lambda}(\nu)}}
      .
  \end{align}
  Partial fraction decomposition with respect to the variable \(z\) (abbreviated by \( \PFD_z\))
  of the right-hand side of Equation \eqref{eq:graded-character-symmetric} gives

  \begin{prp}
    Let \( \mathfrak{g} \) be a semi-simple complex Lie algebra of rank \(r\) and \(V(\lambda)\)
    a fixed irreducible representation of \(\mathfrak{g}\) with weight space decomposition
    \( V(\lambda) = \bigoplus_{\nu \in X} V(\lambda)_{\nu} \) and weight multiplicity function
    \( m_{\lambda} : X \rightarrow \mathbb{N}\). With
    \( q = e^{i\langle \cdot , x \rangle} = (q_1 , \ldots , q_r) \) as above,
    \begin{align}
      \label{eq:pfd-graded-character}
      \PFD_z \left( \prod\limits_{\nu \in X} \frac{1}{(1-q^{\nu}z)^{m_{\lambda}(\nu)}} \right)
        =
	\sum\limits_{\nu \in X} \sum\limits_{k=1}^{m_{\lambda}(\nu)}
        A_{\nu,k}(q) \frac{1}{(1-q^{\nu}z)^k}
    \end{align}
    where for each \( \nu \in X \) and \( k \in \mathbb{N} \) we have 
    \begin{align*}
      A_{\nu,k}(q)
        \in
        \mathbb{C}(q_1,\ldots,q_r)
	.
    \end{align*}  
  \end{prp}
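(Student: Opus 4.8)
The plan is to read the left-hand side of \eqref{eq:pfd-graded-character} as a genuine univariate rational function in $z$ over the field $K := \mathbb{C}(q_1,\ldots,q_r)$, and then to invoke the existence and uniqueness of the partial fraction decomposition over a field. The first and conceptually most important step is to note that although the product is indexed by all of $X$, the multiplicity function $m_\lambda$ is supported on the finitely many weights of the finite-dimensional representation $V(\lambda)$ and vanishes elsewhere. Hence only finitely many factors are nontrivial, and the product is a well-defined element $1/\prod_\nu (1-q^\nu z)^{m_\lambda(\nu)}$ of $K(z)$ whose denominator is a polynomial in $z$ of degree $\sum_\nu m_\lambda(\nu) = \dim V(\lambda)$ and whose numerator is the constant $1$.

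Next I would verify that the denominator is a product of powers of pairwise coprime linear factors. The factor $1 - q^\nu z$ has its unique zero at $z = q^{-\nu} \in K$, so I must check that these zeros are distinct for distinct weights. This is exactly where the structure of the weight lattice enters: writing $\nu = c_1\omega_1 + \ldots + c_r\omega_r$ with $(c_1,\ldots,c_r) \in \mathbb{Z}^r$ gives $q^\nu = q_1^{c_1}\cdots q_r^{c_r}$, and since the fundamental weights form a $\mathbb{Z}$-basis of $X$, distinct weights produce distinct Laurent monomials, hence distinct elements of $K$. Therefore the points $q^{-\nu}$ are pairwise distinct and the linear factors $1-q^\nu z$ are pairwise coprime in $K[z]$.

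With these two observations in hand the conclusion is formal. The rational function is proper, since its numerator has degree $0$ while its denominator has degree $\dim V(\lambda) \geq 1$, so the classical partial fraction theorem over the field $K$ yields a unique expansion $\sum_\nu \sum_{k=1}^{m_\lambda(\nu)} A_{\nu,k}(q)/(1-q^\nu z)^k$ with all coefficients $A_{\nu,k}(q)$ lying in $K = \mathbb{C}(q_1,\ldots,q_r)$. The only cosmetic point is that the textbook decomposition is phrased in terms of $(z-q^{-\nu})^{-k}$; since $1 - q^\nu z = -q^\nu(z-q^{-\nu})$ and $-q^\nu$ is a unit in $K$, rewriting in terms of $(1-q^\nu z)^{-k}$ merely rescales each coefficient by a power of $-q^\nu$ and keeps it inside $K$. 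The main obstacle is really just the bookkeeping in the first two steps, namely passing from the infinite group-ring picture to an honest finite product over a genuine field of scalars; once the finite support of $m_\lambda$ and the injectivity of $\nu \mapsto q^\nu$ are recorded, existence, uniqueness, and the membership $A_{\nu,k} \in \mathbb{C}(q_1,\ldots,q_r)$ all follow from commutative algebra with no analytic input.
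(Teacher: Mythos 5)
Your proof is correct and takes essentially the same route as the paper, whose ``proof'' of this proposition is simply a citation of standard references on partial fraction decomposition over a field. Your write-up usefully makes explicit the two hypotheses those references require in this setting --- that \( m_{\lambda} \) has finite support, so the product is an honest rational function in \( z \) over \( K = \mathbb{C}(q_1,\ldots,q_r) \), and that distinct weights yield distinct Laurent monomials \( q^{\nu} \), so the poles \( z = q^{-\nu} \) are pairwise distinct and the linear factors pairwise coprime in \( K[z] \) --- which is precisely the bookkeeping needed to invoke the classical theorem.
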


  \begin{proof}
	  See e.g. \cite{MR533571}, \cite{MR1878556}, \cite[Lemma 1]{arx09121131}.	  
  \end{proof}


  \begin{nte}
    The right-hand side of Equation \eqref{eq:pfd-graded-character} is a finite sum
    since the second summation gives zero if a weight \(\nu\) does not appear in \(V(\lambda)\),
    i.e. \( m_{\lambda}(\nu) = 0 \).
  \end{nte}  

  We aim at a power series expansion of the right-hand side of Equation 
  \eqref{eq:pfd-graded-character} with respect to the variable \(z\). The following proposition
  will make life easier.
 
  \begin{prp}
    \label{prp:iterated-summation-powers}
    For \( \nu \in X \) and \( q = e^{i\langle \cdot , x \rangle} \) as above, we have
    \begin{align*}
      \frac{1}{(1-q^{\nu}z)^k}
        =
	\sum\limits_{N=0}^{\infty} z^N q^{N\nu} p_{k}(N)
    \end{align*}  
    where \( p_{k}(N) \) is a polynomial in \( N \) of degree
    \( k - 1 \) given by
    \begin{align*}
      p_k(N) = \binom{N+k-1}{N}
      .
    \end{align*}
  \end{prp}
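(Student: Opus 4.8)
The plan is to prove the identity $\frac{1}{(1-q^\nu z)^k} = \sum_{N=0}^\infty z^N q^{N\nu} p_k(N)$ by recognizing the left-hand side as a standard generalized binomial (negative binomial) series and then verifying that the coefficient of $z^N q^{N\nu}$ is exactly $\binom{N+k-1}{N}$. First I would substitute $w = q^\nu z$ to reduce the claim to the single-variable identity $(1-w)^{-k} = \sum_{N=0}^\infty \binom{N+k-1}{N} w^N$, since each term $z^N q^{N\nu}$ collects into $(q^\nu z)^N = w^N$. This is the cleanest route because it isolates the genuinely combinatorial content from the bookkeeping of the $q$-variables.

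To establish the single-variable identity, I would invoke the generalized binomial theorem with exponent $-k$: expanding $(1-w)^{-k} = \sum_{N=0}^\infty \binom{-k}{N}(-w)^N$ and then simplifying the generalized binomial coefficient via
\begin{align*}
  \binom{-k}{N}(-1)^N = \frac{(-k)(-k-1)\cdots(-k-N+1)}{N!}(-1)^N = \frac{k(k+1)\cdots(k+N-1)}{N!} = \binom{N+k-1}{N}.
\end{align*}
This identifies the coefficient exactly as $p_k(N)$ and simultaneously confirms that $p_k(N) = \binom{N+k-1}{N}$ is a polynomial in $N$ of degree $k-1$, since the numerator $k(k+1)\cdots(k+N-1)$ viewed as $\prod_{j=1}^{k-1}(N+j)$ after rewriting has exactly $k-1$ linear factors in $N$ divided by the constant $(k-1)!$.

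An alternative I might use, if one prefers to avoid appealing to the generalized binomial theorem directly, is induction on $k$. The base case $k=1$ is the ordinary geometric series $\frac{1}{1-w} = \sum_{N\geq 0} w^N$, where $p_1(N) = 1 = \binom{N}{N}$. For the inductive step I would differentiate the geometric-type series or multiply by the appropriate telescoping factor, using the Pascal-type recurrence $\binom{N+k-1}{N} = \binom{N+k-2}{N} + \binom{N+k-2}{N-1}$ to match the coefficients produced by the recursion $\frac{1}{(1-w)^k} = \frac{1}{1-w}\cdot\frac{1}{(1-w)^{k-1}}$ and a Cauchy product of power series.

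I do not expect a serious obstacle here, as the statement is a classical power-series expansion; the only point requiring mild care is the manipulation of the generalized binomial coefficient to land on the clean form $\binom{N+k-1}{N}$ with the correct sign, and confirming the degree claim, namely that $p_k$ has degree exactly $k-1$ in $N$. All of this is elementary, so the proof will be short.
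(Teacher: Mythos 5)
Your proof is correct, but it takes a different route from the paper. You substitute $w = q^{\nu}z$ and invoke the generalized binomial theorem, computing $\binom{-k}{N}(-1)^N = \binom{N+k-1}{N}$ directly; the paper instead writes $(1-q^{\nu}z)^{-k}$ as the $k$-fold Cauchy product of the geometric series and reads off the coefficient of $z^N q^{N\nu}$ as the iterated sum
\begin{align*}
  p_k(N) = \sum\limits_{j_{k-1}=0}^{N}\sum\limits_{j_{k-2}=0}^{j_{k-1}}\cdots\sum\limits_{j_1=0}^{j_2} 1 = \binom{N+k-1}{N},
\end{align*}
i.e.\ it counts the ordered decompositions of $N$ into $k$ nonnegative parts combinatorially. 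Your second, inductive sketch (geometric series base case plus the Pascal recurrence under one Cauchy product) is essentially an unrolled version of the paper's argument, so the two are close in spirit there. What your primary route buys is immediacy: the generalized binomial coefficient manipulation lands on the closed form in one step and makes the polynomiality and degree claim transparent via $\binom{N+k-1}{N} = \binom{N+k-1}{k-1} = \frac{(N+1)\cdots(N+k-1)}{(k-1)!}$, which has exactly $k-1$ linear factors in $N$. What the paper's route buys is self-containedness and a combinatorial interpretation of $p_k(N)$ (a count of weak compositions), at the cost of leaving the evaluation of the iterated sum to the reader. Both are complete; the only point in your write-up needing care is the degree computation, and your rewriting of the numerator as $\prod_{j=1}^{k-1}(N+j)$ over $(k-1)!$ handles it correctly.
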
  

  \begin{proof}
    Write down the Cauchy product of the \(k\)-th power of the geometric series
    \( (1-q^{\nu}z)^{-1} \). Then, you see that \( p_{k}(N) \) is given by
    \begin{align*}
      p_{k}(N)
        =
	\sum\limits_{j_{k-1} = 0}^N \sum\limits_{j_{k-2} = 0}^{j_{k-1}}
        \cdots \sum\limits_{j_{1} = 0}^{j_{2}} 1
	= \binom{N+k-1}{N}
	.
    \end{align*}
  \end{proof}  

  As a direct consequence of Equation \eqref{eq:pfd-graded-character} and Proposition
  \ref{prp:iterated-summation-powers} we obtain our main result.

  \begin{thm}[Character formula]
    \label{thm:character-formula}
    Let \( \mathfrak{g} \) be a semi-simple complex Lie algebra of rank \(r\) and \(V(\lambda)\)
    a fixed irreducible representation of \(\mathfrak{g}\) with weight space decomposition
    \( V(\lambda) = \bigoplus_{\nu \in X} V(\lambda)_{\nu} \) and weight multiplicity function
    \( m_{\lambda} : X \rightarrow \mathbb{N}\). Then, with
    \( q = e^{i\langle \cdot , x \rangle} = (q_1 , \ldots , q_r) \) as above, we have
    \begin{align*}
      \Char S^NV(\lambda)(ix)
        =
	\sum\limits_{\nu \in X} q^{N\nu}
	\sum\limits_{k=1}^{m_{\lambda}(\nu)}
        A_{\nu,k}(q) \cdot p_k (N)
	\in \mathbb{C}(q_1,\ldots,q_r)[X]
    \end{align*}
    with rational functions \( A_{\nu,k}(q) \in \mathbb{C}(q_1,\ldots,q_r) \)
    and polynomials \( p_k (N) \in \mathbb{Q}[N] \) of degree \(k-1\) given by 
    \begin{align*}
      p_k(N) = \binom{N+k-1}{N}
      .
    \end{align*}
    Furthermore, for a weight \( \mu \in X \) and \( l = 0,\ldots,m_{\lambda}(\mu)-1 \)
    we have
    \begin{align}
    \label{eq:rational-functions-differentation}
      A_{\mu,m_{\lambda}(\mu) - l}(q)
        =
	\frac{(-1)^l}{l! q^{l \mu}} \cdot \frac{d^l}{(dz)^l}
	\left[ \prod\limits_{\nu \in X \setminus \mu}
	\frac{1}{(1-q^{\nu}z)^{m_{\lambda}(\nu)}} \right]_{z = q^{-\mu}} 
	.
    \end{align}	    
  \end{thm}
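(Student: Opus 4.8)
The plan is to combine the partial fraction decomposition of Equation~\eqref{eq:pfd-graded-character} with the geometric series expansion of Proposition~\ref{prp:iterated-summation-powers}, and then to read off the coefficient of \( z^N \) on both sides of Equation~\eqref{eq:graded-character-symmetric}. Before doing so I would record the structural fact that makes everything legitimate: since \( V(\lambda) \) is finite-dimensional, only finitely many weights \( \nu \in X \) satisfy \( m_{\lambda}(\nu) \neq 0 \). Hence the product in Equation~\eqref{eq:graded-character-symmetric} and every sum over \( X \) below is genuinely finite, and the only infinite process is the formal power series expansion in \( z \), whose coefficients lie in \( \mathbb{C}(q_1,\ldots,q_r) \).

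For the character formula itself I would substitute the identity of Proposition~\ref{prp:iterated-summation-powers}, namely \( (1-q^{\nu}z)^{-k} = \sum_{N=0}^{\infty} z^N q^{N\nu} p_k(N) \), into the right-hand side of Equation~\eqref{eq:pfd-graded-character}. Interchanging the two finite summations over \( \nu \) and \( k \) with the sum over \( N \) gives
\[
  \prod_{\nu \in X} \frac{1}{(1-q^{\nu}z)^{m_{\lambda}(\nu)}}
  = \sum_{N=0}^{\infty} z^N \sum_{\nu \in X} q^{N\nu} \sum_{k=1}^{m_{\lambda}(\nu)} A_{\nu,k}(q)\, p_k(N).
\]
Comparing this with the left-hand side of Equation~\eqref{eq:graded-character-symmetric}, which equals \( \sum_{N=0}^{\infty} z^N \Char S^N V(\lambda)(ix) \), and equating the coefficients of \( z^N \) yields the asserted closed formula for \( \Char S^N V(\lambda)(ix) \).

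For the explicit expression \eqref{eq:rational-functions-differentation} of the coefficients I would use the standard cover-up argument for poles of higher order. Writing \( m = m_{\lambda}(\mu) \), I multiply Equation~\eqref{eq:pfd-graded-character} by \( (1-q^{\mu}z)^{m} \). On the left this clears the factor at \( \nu = \mu \) and leaves \( g(z) := \prod_{\nu \in X \setminus \mu} (1-q^{\nu}z)^{-m_{\lambda}(\nu)} \), which is holomorphic near \( z = q^{-\mu} \) because distinct weights give poles at distinct points \( z = q^{-\nu} \). On the right it produces \( \sum_{k=1}^{m} A_{\mu,k}(q)(1-q^{\mu}z)^{m-k} \) together with a remainder divisible by \( (1-q^{\mu}z)^{m} \). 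Applying \( \frac{d^l}{(dz)^l} \) and evaluating at \( z = q^{-\mu} \), the Leibniz rule shows that every contribution from the remainder still carries a factor \( (1-q^{\mu}z)^{m-j} \) with \( m-j \geq m-l \geq 1 \) (as \( l \leq m-1 \)) and hence vanishes there, while among the terms \( (1-q^{\mu}z)^{m-k} \) only the one with \( m-k = l \), i.e.\ \( k = m-l \), survives, contributing \( A_{\mu,m-l}(q)\cdot l!\cdot(-q^{\mu})^l \). Solving for \( A_{\mu,m-l}(q) \) and using \( (-q^{\mu})^{-l} = (-1)^l q^{-l\mu} \) reproduces Equation~\eqref{eq:rational-functions-differentation}.

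The computation is essentially routine, exactly as the phrase ``direct consequence'' in the text suggests. The only points requiring care are the two that I would therefore flag as the main (mild) obstacles: first, the justification of the termwise interchange of summations, which is secured entirely by the finite support of \( m_{\lambda} \), reducing everything to formal power series in \( z \) over \( \mathbb{C}(q_1,\ldots,q_r) \); and second, the bookkeeping in the \( l \)-th derivative, where one must verify that precisely the single index \( k = m-l \) contributes and that the prefactor \( l!\,(-q^{\mu})^l \) comes out correctly. No deeper difficulty is expected.
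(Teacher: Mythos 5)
Your proposal is correct and follows essentially the same route as the paper: the first part extracts the coefficient of \(z^N\) from the partial fraction decomposition \eqref{eq:pfd-graded-character} via Proposition \ref{prp:iterated-summation-powers} (the paper phrases this coefficient extraction as \(\Res_{z=0}\) of \(z^{-N-1}\) times the decomposition, which is the same thing), and the second part is the identical cover-up argument of multiplying by \((1-q^{\mu}z)^{m_{\lambda}(\mu)}\), differentiating \(l\) times and evaluating at \(z=q^{-\mu}\). If anything, your bookkeeping is slightly more careful than the paper's displayed computation, which contains minor typographical slips (\(\nu\) where \(\mu\) is meant, and an off-by-one range in the falling-factorial product), but the substance of the two arguments coincides.
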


  \begin{proof}
    From Equation \eqref{eq:pfd-graded-character} we see that
    \begin{align*}
      \Char S^NV(\lambda) (ix)
        =
	\Res_{z=0} \left[ \frac{1}{z^{N+1}}
	\sum\limits_{\nu \in X} \sum\limits_{k=1}^{m_{\lambda}(\nu)}
        A_{\nu,k}(q) \frac{1}{(1-q^{\nu}z)^k} \right]
	.
    \end{align*}
    Then, Proposition \ref{prp:iterated-summation-powers} finishes the first part of
    the proof.  For the second part multiply the right-hand side of Equation
    \eqref{eq:pfd-graded-character} by \( (1-q^{\mu}z)^{m_{\lambda}(\mu)} \) which is
    equivalent to take the product over \(X \setminus \mu\) in Equation
    \eqref{eq:rational-functions-differentation}.
    By the product rule of differentiation we see that all summands except the
    \( \mu \)-th one give zero after differentiation and evaluation at \( q^{-\mu} \).
    Therefore, the remaining part is
    \begin{align*}
      & \frac{d^l}{(dz)^l}
        \left[
        \sum\limits_{k=1}^{m_{\lambda}(\mu)}
        A_{\mu,k}(q) (1-q^{\nu}z)^{m_{\lambda}(\mu) - k}
        \right]_{z=q^{-\mu}}
      \\
      & = \left[
        \sum\limits_{k=m_{\lambda}(\mu) -l}^{m_{\lambda}(\mu)}
        A_{\mu,k}(q) (-1)^l q^{l\mu}
	\prod\limits_{i=0}^{l}(m_{\lambda}(\mu) -k -i)(1-q^{\nu}z)^{m_{\lambda}(\mu) - k -l}
        \right]_{z=q^{-\mu}}
      \\
      & = A_{\mu,m_{\lambda}(\mu)-l}(q) (-1)^l q^{l\mu} l!
      .
    \end{align*}	    
  \end{proof}
  
  \begin{nte}
    \label{nte:closed-character-formula}
    Now we are able to explain the term ``closed'' used in the beginning of this
    section. Namely, the identity stated in Theorem \ref{thm:character-formula} 
    shows that all relevant data needed to describe the character of \(S^N V(\lambda)\),
    in particular the rational functions \( A_{\nu,k}(q) \), depends on the weight space
    decomposition and weight multiplicity function \(m_{\lambda}\) of the fixed
    representation \(V(\lambda)\).
  \end{nte}

  \begin{nte}
  \label{nte:multiplicity-1-evaluation}
    Equation \eqref{eq:rational-functions-differentation} might be a simple observation but it is
    a very effective method to compute the rational functions associated to weights of
    multiplicity \(1\). Then, we have no differentiation but just simple evaluation.
    In particular, one can immediately compute the character of the symmetric powers of a
    multiplicity free irreducible representation \(V\). Note that in this case one could also
    obtain the character of \(S^N V\) by plugging the \(k\)-many weights of the representation
    \(V\) into the complete homogeneous symmetric polynomial identity
    \begin{align*}
      h_N (x_1,\ldots,x_k) = \sum\limits_{i=1}^k \frac{x_i^N}{\prod_{j \neq i} (1-x_jx_i^{-1})}.
    \end{align*}
  \end{nte}  

  As a consequence of Note \ref{nte:multiplicity-1-evaluation} we can prove concrete
  character formulas for the symmetric powers of the irreducible representations \(V(m)\) of
  \( \mathfrak{g} \) being of type \( A_1 \) and furthermore for the symmetric powers of
  the fundamental representation \(V(\omega_1)\) of \( \mathfrak{g} \) of type \( A_r \).

  \begin{cor}
    \label{cor:character-formula-sl2_irreducible}
    For \( \mathfrak{g} = \mathfrak{sl}(2,\mathbb{C}) \) and its irreducible representation
    \( V(m) \), \( m \in \mathbb{N} \), the Fourier series of the character of \( S^N V(m) \) is
    given by
    \begin{align*}
      \Char S^N V(m)(ix)
        =
	\sum\limits_{i=0}^{m} q^{N(m-2i)} A_{m-2i,1}(q)
	\in \mathbb{C}(q)[X]
    \end{align*}
    where \( q = e^{ix} \) as above and with rational functions
    \begin{align*}
      A_{m-2i,1}(q)
        =
        (-1)^i q^{(m-i)(m-i+1)}
	\prod\limits_{\genfrac{}{}{0pt}{}{j=0}{j \neq i}}^m \frac{1}{q^{2 |i-j|}-1}
	.
    \end{align*}
  \end{cor}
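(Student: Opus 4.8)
The plan is to specialize the general character formula of Theorem~\ref{thm:character-formula} to the multiplicity-free setting of \( \mathfrak{sl}(2,\mathbb{C}) \) and then simplify the resulting evaluation by hand. First I would record the weight data: the irreducible representation \( V(m) \) of \( \mathfrak{sl}(2,\mathbb{C}) \) has weights \( m-2i \) for \( i=0,\ldots,m \), each occurring with multiplicity one, so \( m_{\lambda}(m-2i)=1 \) and \( m_{\lambda}(\nu)=0 \) otherwise. Feeding this into Theorem~\ref{thm:character-formula}, the inner sum over \( k \) collapses to its single term \( k=1 \), where \( p_1(N)=\binom{N}{N}=1 \), and the outer sum runs over \( \nu = m-2i \). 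This already produces the first displayed identity of the Corollary, reducing everything to the determination of the coefficients \( A_{m-2i,1}(q) \).

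Next I would invoke the differentiation formula \eqref{eq:rational-functions-differentation} with \( \mu = m-2i \). Since the multiplicity is one, the only admissible value is \( l=0 \), so the derivative disappears and the formula degenerates to the plain evaluation
\begin{align*}
  A_{m-2i,1}(q)
    =
    \left[ \prod_{\substack{j=0 \\ j\neq i}}^m \frac{1}{1-q^{(m-2j)}z} \right]_{z=q^{-(m-2i)}}
    =
    \prod_{\substack{j=0 \\ j\neq i}}^m \frac{1}{1-q^{2(i-j)}}
    ,
\end{align*}
using \( \nu-\mu = (m-2j)-(m-2i) = 2(i-j) \). The remaining task is purely algebraic: to rewrite this product in the normalized shape appearing in the statement.

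The heart of the computation is a case split on the sign of \( i-j \). For the \( i \) factors with \( j<i \) I would write \( 1-q^{2(i-j)} = -(q^{2|i-j|}-1) \), which contributes the global sign \( (-1)^i \). For the \( m-i \) factors with \( j>i \) I would write \( 1-q^{2(i-j)} = 1-q^{-2|i-j|} = q^{-2|i-j|}(q^{2|i-j|}-1) \), which contributes, after clearing, the positive power \( q^{E} \) with \( E = 2\sum_{j=i+1}^m (j-i) \). Collecting these two contributions turns the product into \( (-1)^i q^{E}\prod_{j\neq i}(q^{2|i-j|}-1)^{-1} \), which is exactly the claimed expression provided \( E=(m-i)(m-i+1) \).

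I expect the only real obstacle to be this bookkeeping of the exponent, and it is resolved by the triangular-number identity \( \sum_{j=i+1}^m (j-i) = \sum_{l=1}^{m-i} l = \tfrac{1}{2}(m-i)(m-i+1) \), whence \( E=(m-i)(m-i+1) \). Everything else is the routine verification that the sign counts exactly the \( i \) negative-index factors and that the powers of \( q^{-1} \) from the positive-index factors combine correctly, so no further input beyond Theorem~\ref{thm:character-formula} and elementary algebra is needed.
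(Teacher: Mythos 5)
Your proposal is correct and follows essentially the same route as the paper's own proof: specialize Theorem~\ref{thm:character-formula} to the multiplicity-free weights \( m-2i \) (so \( l=0 \) and Equation~\eqref{eq:rational-functions-differentation} reduces to plain evaluation at \( z=q^{-(m-2i)} \)), then split the product at the sign of \( i-j \) to extract the factor \( (-1)^i \) and the exponent \( (m-i)(m-i+1) \) via the triangular-number sum. The paper's computation is the same evaluation and regrouping, just written as a chain of equalities.
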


  \begin{proof}
    The weights of \( V(m) \) are given by \( (m-2i)\omega_1 \) where \( i = 0,\ldots,m \).
    By Theorem \ref{thm:character-formula} we immediately obtain the claimed character
    formula and
    \begin{align*}
      A_{m-2i,1}(q)
        & =
	  \frac{(-1)^0}{0! q^{0(m-2i)\omega_1}} \frac{d^0}{(dz)^0}
	  \left[
	    \prod\limits_{\genfrac{}{}{0pt}{}{j=0}{j \neq i}}^m
	    \frac{1}{1-q^{(m-2j)} z}
	  \right]_{z=q^{-(m-2i)}}
      \\  
        & =
	  \prod\limits_{\genfrac{}{}{0pt}{}{j=0}{j \neq i}}^m
	  \frac{1}{1-q^{(m-2j)} q^{-(m-2i)}}
      \\  
        & =
	  \prod\limits_{0 \leq j < i} \frac{1}{1-q^{2(i-j)}}
	  \prod\limits_{i < j \leq m} \frac{q^{2(j-i)}}{q^{2(j-i)}-1}
      \\  
        & =
	  (-1)^i q^{(m-i)(m-i+1)}
	  \prod\limits_{\genfrac{}{}{0pt}{}{j=0}{j \neq i}}^m \frac{1}{q^{2 |i-j|}-1}
	  .
    \end{align*}
  \end{proof}

  \begin{exl}
  \label{exl:character-formula_sl2_V2}	  
    Let \( \mathfrak{g} = \mathfrak{sl}(2,\mathbb{C}) \). Since \( S^N V(0) = V(0) \) and
    \( S^N V(1) = V(N) \), the first non-trivial example is given by the adjoint representation
    \( V(2) \) and its symmetric powers \( S^N V(2) \).
    By Corollary \ref{cor:character-formula-sl2_irreducible} we have
    \begin{align*}
      \Char S^N V(2)(ix) =
        & 
	\frac{q^6}{(q^4-1)(q^2-1)} \cdot q^{2N} + \frac{-q^2}{(q^2-1)^2} \cdot q^{0} \\
        & + \frac{1}{(q^4-1)(q^2-1)} \cdot q^{-2N}
	.
    \end{align*}
  \end{exl}
  
  \begin{cor}
    \label{cor:character-formula-slr+1_fundamental}
    Let \( \mathfrak{g} = \mathfrak{sl}(r+1,\mathbb{C}) \) and consider its fundamental
    representation \( V(\omega_1) \). Set \( \omega_0 = \omega_{r+1} = 0 \), i.e. extend
    \( q = e^{i \langle \cdot , x \rangle} = (q_1, \ldots , q_r) \) by \( q_0 = q_{r+1} = 1 \).
    Then, 
    \begin{align*}
      \Char S^N V(\omega_1)(ix)
        =
	\sum\limits_{i=0}^r q_i^{-N} q_{i+1}^{N} A_{-\omega_i + \omega_{i+1},1}(q)
	\in \mathbb{C}(q_1,\ldots,q_r)[X]
    \end{align*}	    
    with rational functions
    \begin{align*}
      A_{-\omega_i + \omega_{i+1},1}(q)
        =
	q_{i+1}^r \prod\limits_{\genfrac{}{}{0pt}{}{j=0}{j \neq i}}^r
	\frac{q_j}{q_j q_{i+1} - q_{j+1} q_i}
	.
    \end{align*}
  \end{cor}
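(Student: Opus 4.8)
The plan is to identify \(V(\omega_1)\) as the standard \((r+1)\)-dimensional representation of \(\mathfrak{sl}(r+1,\mathbb{C})\) and to exploit that it is multiplicity free, so that Theorem~\ref{thm:character-formula} applies in the degenerate form highlighted in Note~\ref{nte:multiplicity-1-evaluation}. First I would record the weights: written in fundamental weights they are
\begin{align*}
  \mu_i := -\omega_i + \omega_{i+1}, \qquad i = 0,\ldots,r,
\end{align*}
under the convention \(\omega_0 = \omega_{r+1} = 0\); the extreme cases \(\mu_0 = \omega_1\) and \(\mu_r = -\omega_r\) recover the highest and lowest weights, and every \(\mu_i\) occurs with multiplicity one. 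Consequently the inner sum of Theorem~\ref{thm:character-formula} collapses to its single term \(k=1\), for which \(p_1(N) = \binom{N}{N} = 1\), and the coefficient is a pure evaluation with no differentiation (\(l=0\)):
\begin{align*}
  A_{\mu_i,1}(q) = \prod_{\substack{j=0\\ j\neq i}}^{r} \frac{1}{1 - q^{\mu_j - \mu_i}}.
\end{align*}
This already yields the stated shape \(\Char S^N V(\omega_1)(ix) = \sum_{i=0}^r q^{N\mu_i} A_{\mu_i,1}(q)\), so it remains to transcribe both the prefactor and each \(A_{\mu_i,1}(q)\) into the coordinates \(q = (q_1,\ldots,q_r)\).

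Second, I would translate into coordinates. From the definition of \(q^\mu\) and \(\mu_i = -\omega_i + \omega_{i+1}\) we get \(q^{\mu_i} = q_{i+1} q_i^{-1}\) (with \(q_0 = q_{r+1} = 1\)), hence the prefactor \(q^{N\mu_i} = q_i^{-N} q_{i+1}^N\) of the statement. For the individual factors I would compute
\begin{align*}
  1 - q^{\mu_j - \mu_i}
    = 1 - \frac{q_{j+1} q_i}{q_j q_{i+1}}
    = \frac{q_j q_{i+1} - q_{j+1} q_i}{q_j q_{i+1}},
\end{align*}
so that each reciprocal equals \(q_j q_{i+1}/(q_j q_{i+1} - q_{j+1} q_i)\).

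Finally I would assemble the product over the \(r\) indices \(j \neq i\). Each factor contributes one power of \(q_{i+1}\) to its numerator; extracting these as \(q_{i+1}^r\) leaves precisely
\begin{align*}
  A_{\mu_i,1}(q) = q_{i+1}^r \prod_{\substack{j=0\\ j\neq i}}^{r} \frac{q_j}{q_j q_{i+1} - q_{j+1} q_i},
\end{align*}
as claimed. The computation is entirely routine; the only point demanding care is the bookkeeping, namely that the boundary convention \(q_0 = q_{r+1} = 1\) behaves correctly at \(i = 0\) and \(i = r\) and that exactly \(r\) (not \(r+1\)) factors enter the product, so that the extracted power is indeed \(q_{i+1}^r\). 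This is where I expect any index or sign slip to hide, so as a sanity check I would specialize to \(r = 1\), where \(V(\omega_1) = V(1)\) for \(\mathfrak{sl}(2,\mathbb{C})\) and the output must agree with Corollary~\ref{cor:character-formula-sl2_irreducible}.
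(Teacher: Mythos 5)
Your proposal is correct and follows essentially the same route as the paper: it invokes Theorem~\ref{thm:character-formula} in the multiplicity-free case (no differentiation, \(l=0\), \(k=1\), \(p_1(N)=1\)), evaluates \(\prod_{j\neq i}(1-q^{\mu_j}z)^{-1}\) at \(z=q^{-\mu_i}\), and performs the identical coordinate translation \(1-q^{\mu_j-\mu_i} = (q_jq_{i+1}-q_{j+1}q_i)/(q_jq_{i+1})\) before extracting \(q_{i+1}^r\) from the \(r\) factors. Your bookkeeping of the boundary convention \(q_0=q_{r+1}=1\) and the factor count matches the paper's computation exactly.
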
	  

  \begin{proof}
    The weights of the fundamental representation \( V(\omega_1) \) are
    \( \omega_1, -\omega_1 +\omega_2 , \ldots , -\omega_{n-1} + \omega_n , -\omega_n \) all
    of multiplicity \( 1 \). Again, by Theorem \ref{thm:character-formula} 
    the claimed character formula follows and
    \begin{align*}
      A_{-\omega_i + \omega_{i+1},1}(q)
        & =
	  \frac{(-1)^0}{0! q^{0(-\omega_i + \omega_{i+1})}} \frac{d^0}{(dz)^0}
	  \left[
	    \prod\limits_{\genfrac{}{}{0pt}{}{j=0}{j \neq i}}^r
	    \frac{1}{1-q_j^{-1} q_{j+1}^{} z}
	  \right]_{z=q_i^{} q_{i+1}^{-1}}
      \\	
        & =
	  \prod\limits_{\genfrac{}{}{0pt}{}{j=0}{j \neq i}}^r
  	  \frac{1}{1-q_j^{-1} q_{j+1}^{} q_i^{} q_{i+1}^{-1}}
      \\	
        & =
	  \prod\limits_{\genfrac{}{}{0pt}{}{j=0}{j \neq i}}^r
	\frac{q_j q_{i+1}}{q_j q_{i+1} - q_{j+1} q_i}
      \\	
        & =
	  q_{i+1}^r \prod\limits_{\genfrac{}{}{0pt}{}{j=0}{j \neq i}}^r
	  \frac{q_j}{q_j q_{i+1} - q_{j+1} q_i}
      .
    \end{align*}
  \end{proof}  

  \begin{rem}
    \label{rem:dh-measure}
    For \( \mathfrak{g} = \mathfrak{sl}(r+1,\mathbb{C}) \) we have an interesting aspect
    coming up. Since \( S^N V(\omega_1) = V(N\omega_1) \), it is interesting to ask how the
    formulas obtained in Corollary \ref{cor:character-formula-slr+1_fundamental} compare to the
    asymptotic theory of the Duistermaat-Heckman measure with respect to the sequence of
    representations \( V(N\omega_1) \). 
  \end{rem} 

  \begin{nte}
    Similarly to Corollary \ref{cor:character-formula-slr+1_fundamental} one can compute
    the characters of the symmetric powers of the representations \( V(\omega_i)\) for
    \(i=2,\ldots,r\). Note that although the number of weights contributing to
     \( S^N V(\omega_1) = V(N\omega_1) \) grows with \(N\), their multiplicities always remain
     equal to \(1\). Nevertheless, the rational functions associated to \(V(\omega_1)\)
     do not carry only trivial information, the number \(1\), but also encode which
     weights appear in \(S^N V(\omega_1) \).
     In contrast, the weight multiplicities in \( S^N V(\omega_i) \) for \(i = 2,\ldots,r \)
     are non-trivial and consequently their associated rational functions encode much more
     information. It is part of the full version of this extended abstract to compute
     the characters of the \( S^N V(\omega_i) \) and compare those formulas.
  \end{nte}	  

  For representations with higher dimensional weight spaces (\( \dim \geq 2 \))
  the computations become more difficult. We will demonstrate this by an example. 

  \begin{exl}
    Let \( \mathfrak{g} = \mathfrak{sl}(3,\mathbb{C}) \) and \( V(\omega_1 + \omega_2) \)
    be its adjoint representation which decomposes as shown in the following picture, where
    \( q = e^{i \langle \cdot , x \rangle } = (q_1,q_2)=(a,b) \) with respect
    to the fundamental weights \( \omega_1,\omega_2 \) and the simple coroots
    \( \alpha_1^{\vee}, \alpha_2^{\vee} \).
    \[
    \begin{tikzpicture}
      \draw[->] (0,0) -- (1,1.73) node[right] {\( q^{\omega_1 + \omega_2} = a b \)}; 
      \draw[->] (0,0) -- (2,0) node[right] {\( q^{2 \omega_1 - \omega_2} = a^2 b^{-1} \)};
      \draw[->] (0,0) -- (1,-1.73) node[right] {\( q^{\omega_1 - 2 \omega_2} = a b^{-2} \)};
      \draw[->] (0,0) -- (-1,-1.73) node[left] {\( q^{-\omega_1 - \omega_2} = a^{-1} b^{-1} \)};
      \draw[->] (0,0) -- (-2,0) node[left] {\( q^{-2\omega_1 + \omega_2} = a^{-2} b \)};
      \draw[->] (0,0) -- (-1,1.73) node[left] {\( q^{-\omega_1 + 2 \omega_2} = a^{-1} b^2 \)};
      \draw[-] (0,0) to [out=170,in=90] (-1,0);
      \draw[-] (-1,0) to [out=-90,in=-170] (0,0) node[below] {\( q^0 = a^0 b^0 \)};
      \draw[-] (0,0) to [out=-50,in=30] (0.5,-0.865);
      \draw[-] (0.5,-0.865) to [out=-150,in=-70] (0,0);

      \draw[->, densely dashed] (0,0) -- (1,0.58) node[right] {$\omega_1$};
      \draw[->, densely dashed] (0,0) -- (0,1.15) node[above] {$\omega_2$};
    \end{tikzpicture}
    \]
    The picture shows the Littelmann paths \( \mathcal{P}_{\omega_1 + \omega_2} \) of
    shape \( \omega_1 + \omega_2 \) (see e.g. \cite{MR1253196}) and the elements of
    \( \mathbb{Z}[a^{\pm 1},b^{\pm 1}] \) corresponding to the weights of
    \( V(\omega_1 + \omega_2) \).  Here the difficulty lies in computing the rational
    function associated to the
    zero weight which has multiplicity \(2\). This is a first example of a non-trivial
    polynomial \( p_k(N) \) coming up, namely \( p_2(N) = N+1 \). We have
    \begin{align*}
      \Char V(N \omega_1)(ix) = 
	& (A_{0,1}(q) + A_{0,2}(q)p_2(N)) q^{N0}
      \\
	& + A_{2 \omega_1 - \omega_2 ,1}(q) q^{N(2 \omega_1 - \omega_2)}
          + A_{-2 \omega_1 + \omega_2 ,1}(q) q^{N(-2 \omega_1 + \omega_2)}
      \\
	& + A_{\omega_1 -2 \omega_2 ,1}(q) q^{N(\omega_1 -2 \omega_2)}
	  + A_{-\omega_1 +2 \omega_2 ,1}(q) q^{N(-\omega_1 +2 \omega_2)}
      \\
	& + A_{-\omega_1 - \omega_2 ,1}(q) q^{N(-\omega_1 - \omega_2)}
	  + A_{\omega_1 + \omega_2 ,1}(q) q^{N(\omega_1 + \omega_2)}
      .
    \end{align*}
    The difficult part is
    \begin{align*}
      A_{0,1}(q)
        & = \frac{d}{dz} \left[
	    \prod\limits_{\nu \in X \setminus 0}
	    \frac{1}{(1-q^{\nu}z)^{m_{\omega_1 + \omega_2}(\nu)}}
	    \right]_{z = q^0 = a^0 b^0 = 1}
      \\
        & = \frac{-3a^4b^4}{(ab-1)^2(a-b^2)^2(a^2-b)^2}
    \end{align*}
    and we obtain
    \begin{align*}
      \Char V(N \omega_1)(ix) = 
	& \frac{-(3a^4b^4 + a^4b^4p_2(N)) \cdot a^0 b^0}{(ab-1)^2(a-b^2)^2(a^2-b)^2}
      \\
	& + \frac{a^{16}b \cdot a^{2N} b^{-N}}{(ab-1)(a-b^2)(a^2-b)^2(a^3-1)(a^3-b^3)(a^4-b^2)} 
      \\
	& + \frac{-b^{9} \cdot a^{-2N} b^{N}}{(ab-1)(a-b^2)(a^2-b)^2(a^3-1)(a^3-b^3)(a^4-b^2)} 
      \\
	& + \frac{a^{9} \cdot a^{N} b^{-2N}}{(ab-1)(a-b^2)^2(a^2-b)(b^3-1)(a^3-b^3)(a^2-b^4)} 
      \\
	& + \frac{-ab^{16} \cdot a^{-N} b^{2N}}{(ab-1)(a-b^2)^2(a^2-b)(b^3-1)(a^3-b^3)(a^2-b^4)} 
      \\
	& + \frac{ab \cdot a^{-N} b^{-N}}{(ab-1)^2(a-b^2)(a^2-b)(b^3-1)(a^3-1)(a^2b^2-1)} 
      \\
	& + \frac{-a^{9}b^{9} \cdot a^{N} b^{N}}{(ab-1)^2(a-b^2)(a^2-b)(b^3-1)(a^3-1)(a^2b^2-1)} 
      .	
  \end{align*}	    
  \end{exl}

  Let us end this section with an important note.

  \begin{nte}
    \label{nte:Iterated-pfd-Fourier-series}
    In the notation of Theorem \ref{thm:character-formula} note that iterated
    partial fraction decomposition with respect to the variables \(q_1,\ldots,q_r\)
    gives the Fourier series associated to the character of \(S^N V(\lambda)\).
    Thus, decomposing the character formula in Theorem \ref{thm:character-formula}
    further with respect to \(q_1,\ldots,q_r\) yields the weight multiplicity functions
    \(m_{\lambda,N}\). We illustrate this by elaborating on Example
    \ref{exl:character-formula_sl2_V2} where \(r=1\). That is, let us decompose the character
    \begin{align*}
      \Char S^N V(2)(ix) =
        & 
	\frac{q^6}{(q^4-1)(q^2-1)} \cdot q^{2N} + \frac{-q^2}{(q^2-1)^2} \cdot q^{0} \\
        & + \frac{1}{(q^4-1)(q^2-1)} \cdot q^{-2N}
    \end{align*}
    further with respect to \(q\). For e.g. \(N=0,\ldots,5\) this gives
    \begin{align*}
      N & & \Char S^N V(2)(ix) \\
      1 & & q^2 + 1 + q^{-2} \\
      2 & & q^4 + q^2 + 2 + q^{-2} + q^{-4}\\
      3 & & q^6 + q^4 + 2q^2 + 2 + 2q^{-2}+ q^{-4} + q^{-6} \\
      4 & & q^8 + q^6 + 2q^4 + 2q^2 + 3 + 2q^{-2} + 2q^{-4} + q^{-6} + q^{-8} \\
      5 & & q^{10} + q^8 + 2q^6 + 2q^4 + 3q^2 + 3 + 3q^{-2} + 2q^{-4} + 2q^{-6} + q^{-8} + q^{-10}
      .
    \end{align*}
  \end{nte}	  

  \section{A residue-type generating function for the weight multiplicities}
  \label{sec:residue-type-generating-function-multiplicities}
  
  Consider the Fourier series associated to the character of the representation
  \( S^N V(\lambda) \) of our Lie algebra \( \mathfrak{g} \):
  \begin{align*}
    \Char S^N V(\lambda) (ix)
      = \sum\limits_{\nu \in X} m_{\lambda,N}(\nu) e^{i \langle	\nu , x \rangle}
      .
  \end{align*}
  Here \( m_{\lambda,N} \) denotes the weight multiplicity function of
  \(S^N V(\lambda)\). Then, by inverse Fourier transform we can recover the
  Fourier coefficients \( m_{\lambda,N}(\nu) \) as
  \begin{align*}
    m_{\lambda,N}(\nu)
      =
      \frac{1}{(2 \pi)^r}
      \int\limits_{\mathfrak{h}_{\mathbb{R}}/2 \pi X^{\ast}}
      e^{-i \langle \nu , x \rangle} \Char S^N V(\lambda)(ix)
      dx
      .
  \end{align*}  
  Here \(dx\) is Lebesgue measure on \(\mathfrak{h}_{\mathbb{R}}\)
  normalized such that the volume of the torus \(T^r =
  \mathfrak{h}_{\mathbb{R}}/2 \pi X^{\ast}\) is \((2\pi)^r\). Note that
  \(r\) is the rank of \(\mathfrak{g}\). This yields the generating function for the
  weight multiplicity functions \(m_{\lambda,N} \) evaluated at a specific
  weight. That is,

  \begin{prp}
    \label{prp:genf-multiplicities}
    Let \( \mathfrak{g} \) be a semi-simple complex Lie algebra of rank \(r\) and
    \(V(\lambda)\) a fixed irreducible representation of \(\mathfrak{g}\). Let
    \( m_{\lambda,N}\) be the weight multiplicity function of the \(N\)-th
    symmetric power \(S^N V(\lambda)\). Let \(\mu \in X\) be a fixed weight.
    Then, the formal power series \(\sum_{N=0}^{\infty} z^N m_{\lambda,N}(\mu)\)
    is a holomorphic function in the variable \(z\) on \(|z|\leq R <1\).
    Moreover, we have the identity
    \begin{align}
      \label{eq:genf-sym}
      \sum\limits_{N=0}^{\infty} z^N m_{\lambda,N}(\mu)
	= \frac{1}{(2 \pi)^r} \int\limits_{T^r}
	  e^{-i \langle \mu , x \rangle} \prod\limits_{\nu \in X}
	  \frac{1}{(1-e^{i \langle \nu, x\rangle}z)^{m_{\lambda,1}(\nu)}}
	  dx
	  .
    \end{align}  
  \end{prp}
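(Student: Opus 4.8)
The plan is to take the inverse Fourier transform expression for \( m_{\lambda,N}(\mu) \) displayed just above the proposition, multiply it by \( z^N \), sum over \( N \geq 0 \), and then interchange the summation with the integral so that Molien's formula (Lemma~\ref{lem:Molien-formula}) collapses the interior series into the asserted product. I would prove the holomorphicity statement first, since the same polynomial growth estimate that gives it also furnishes the domination needed to justify the interchange.

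For holomorphicity, write \( d = \dim V(\lambda) \) and note that \( 0 \leq m_{\lambda,N}(\mu) \leq \dim S^N V(\lambda) = \binom{N+d-1}{N} \), the latter being a polynomial in \( N \) of degree \( d-1 \). Consequently \( \limsup_{N} m_{\lambda,N}(\mu)^{1/N} \leq 1 \), so the power series \( \sum_{N} z^N m_{\lambda,N}(\mu) \) has radius of convergence at least \( 1 \) and defines a holomorphic function on the closed disc \( |z| \leq R < 1 \).

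To legitimize exchanging \( \sum_N \) and \( \int_{T^r} \), I would use that on the torus \( T^r \) every exponential \( e^{i\langle\nu,x\rangle} \) has modulus \( 1 \), whence \( |\Char S^N V(\lambda)(ix)| \leq \dim S^N V(\lambda) = \binom{N+d-1}{N} \) uniformly in \( x \). For \( |z| = R < 1 \) this gives the uniform majorant \( R^N \binom{N+d-1}{N} \), and \( \sum_N R^N \binom{N+d-1}{N} = (1-R)^{-d} < \infty \). The Weierstrass \( M \)-test (equivalently dominated convergence against the finite Lebesgue measure on the compact torus) then permits the interchange.

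Once the sum is inside, the interior series \( \sum_{N} z^N \Char S^N V(\lambda)(ix) \) is precisely the left-hand side of Molien's formula with \( q^\nu = e^{i\langle\nu,x\rangle} \) and \( \dim V(\lambda)_\nu = m_{\lambda,1}(\nu) \), so it equals \( \prod_{\nu \in X}(1 - e^{i\langle\nu,x\rangle}z)^{-m_{\lambda,1}(\nu)} \). This product is finite, since only finitely many weights occur in \( V(\lambda) \), and each factor is nonzero on \( T^r \) because \( |e^{i\langle\nu,x\rangle}z| = |z| < 1 \); pulling \( e^{-i\langle\mu,x\rangle} \) through yields Equation~\eqref{eq:genf-sym}. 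The sole delicate point is the uniform domination that licenses the interchange of limit and integral; the remaining steps are formal manipulations.
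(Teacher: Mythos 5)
Your proposal is correct and follows essentially the same route as the paper: both rest on the bound \( |\Char S^N V(\lambda)(ix)| \leq \dim S^N V(\lambda) = \binom{N+\dim V(\lambda)-1}{N} \), uniform in \( x \in T^r \), whose polynomial growth gives radius of convergence \( 1 \) and licenses term-by-term integration, after which Molien's formula (Lemma \ref{lem:Molien-formula}) produces the product on the right-hand side of Equation \eqref{eq:genf-sym}. Your explicit majorant \( \sum_N R^N \binom{N+d-1}{N} = (1-R)^{-d} \) is a slightly tidier way of packaging the paper's root-test computation, but the substance is identical.
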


  \begin{proof}
    The assertion follows from the fact that the dimension of the symmetric
    powers of a representation grows sub-exponentially in \(N\) as
    \begin{align*}
      \dim S^N V(\lambda)
	=
	\binom{\dim V(\lambda)-1+N}{N}
	.
    \end{align*}  
    This amounts to say that for the fixed weight \(\mu \in X\) the power series
    \(\sum_{N=0}^{\infty} z^N e^{-i\langle \mu,x \rangle}\Char S^N V(\lambda) (ix)\)
    converges absolutely on \(|z|\leq R<1\) uniformely in \(x \in T^r\).
    Namely, for arbitrary such \(x\) we have
    \begin{align*}
      |e^{-i\langle \mu,x \rangle}\Char S^N V(\lambda) (ix)|
	& =
	  | \sum\limits_{\nu \in X} m_{\lambda,N}(\nu)
	    e^{i\langle \nu,x \rangle} |
      \\
      \mbox{\tiny (triangle inequality)}
	& \leq
	  \sum\limits_{\nu \in X} m_{\lambda,N}(\nu)
      \\
	& =
	  \binom{\dim V(\lambda)-1+N}{N}
      \\
      \mbox{\tiny (\(C\) some constant)}
	& =
	  C N^{\dim V(\lambda)-1} + \mbox{\small lower terms}
      .	  
    \end{align*}  
    Therefore the radius of convergence is given by
    \begin{align*}
      r
	& =
	  \frac{1}{
	    \limsup\limits_{N\rightarrow\infty}
	    \sqrt[N]{|e^{-i\langle \mu,x \rangle}\Char S^N V(\lambda) (ix)|}
	  }
      \\
	& =
	  \frac{1}{
	    \limsup\limits_{N\rightarrow\infty}
	    \sqrt[N]{|C N^{\dim V(\lambda)-1} + \mbox{\tiny lower terms}|}
	  }
      \\
	& =
	  1
	  .
    \end{align*}  
    By Lemma \ref{lem:Molien-formula} the right-hand side of Equation
    \eqref{eq:genf-sym} equals
    \begin{align*}
      \frac{1}{(2\pi)^r} \int\limits_{T^r} e^{-i\langle \mu,x \rangle}
      \sum\limits_{N=0}^{\infty} z^N \Char S^N V(\lambda) (ix) dx
    \end{align*}  
    and since the previous convergence arguments allow us to integrate  term
    by term, this finishes the proof.
  \end{proof}  
  
  Now we are able to explain why the generating function in Equation \eqref{eq:genf-sym}
  is of residue-type. 

  \begin{cor}[Residue-type]
  \label{cor:multiplicities-residue-type}
    Let \( \mathfrak{g} \) be a semi-simple complex Lie algebra of rank \(r\) and
    \(V(\lambda)\) a fixed irreducible representation of \(\mathfrak{g}\). Let
    \( m_{\lambda,N}\) be the weight multiplicity function of the \(N\)-th
    symmetric power \(S^N V(\lambda)\). Let \(\mu \in X\) be a fixed weight and
    denote \(q^{\mu} = e^{i \langle \mu , x \rangle}\) as above.
    Then,
    \begin{align}
      m_{\lambda,N}(\mu)
	= \frac{1}{(2 \pi)^r} \int\limits_{T^r}
	  q^{- \mu}
	  \sum\limits_{\nu \in X} q^{N\nu}
	  \sum\limits_{k=1}^{m_{\lambda}(\nu)}
          A_{\nu,k}(q) \cdot p_k (N)
	  dx
	  .
    \end{align}
    In particular, the multiplicity \( m_{\lambda,N}(\mu) \) equals the constant term of the function
    \( \Char S^N V(\lambda) (ix) \) shifted by \( q^{-\mu} \).
  \end{cor}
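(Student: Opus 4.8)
The plan is to read this off directly from the inverse Fourier transform formula recorded just before Proposition \ref{prp:genf-multiplicities} together with the explicit character formula of Theorem \ref{thm:character-formula}. First I would invoke that inverse transform at the fixed weight \( \mu \) in place of the running weight, i.e.
\[
  m_{\lambda,N}(\mu)
    = \frac{1}{(2\pi)^r} \int\limits_{T^r}
      e^{-i\langle \mu, x\rangle}\, \Char S^N V(\lambda)(ix)\, dx,
\]
and recall the abbreviation \( q^{-\mu} = e^{-i\langle \mu, x\rangle} \) fixed in Section \ref{sec:notation}. Substituting the right-hand side of Theorem \ref{thm:character-formula} for \( \Char S^N V(\lambda)(ix) \) inside the integral then produces precisely the asserted identity; no manipulation of the integrand beyond this substitution is required.

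For the ``in particular'' clause I would observe that the normalized integral \( \tfrac{1}{(2\pi)^r}\int_{T^r}(\cdot)\,dx \) is exactly the operation extracting the constant Fourier coefficient of a function on the torus, by the orthogonality relation \( \tfrac{1}{(2\pi)^r}\int_{T^r} e^{i\langle \nu, x\rangle}\,dx = \delta_{\nu,0} \) for \( \nu \in X \) (valid under the volume normalization \( \mathrm{vol}(T^r) = (2\pi)^r \) chosen above). Writing the character as the Laurent polynomial \( \Char S^N V(\lambda)(ix) = \sum_{\nu \in X} m_{\lambda,N}(\nu)\, q^{\nu} \) and multiplying by \( q^{-\mu} \) shifts the \( \mu \)-indexed term into the constant slot, so that its average over \( T^r \) is exactly \( m_{\lambda,N}(\mu) \), which is the claim.

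The single point I would be careful about — the only genuine subtlety here — is that the individual rational summands \( A_{\nu,k}(q)/(1-q^{\nu}z)^k \), and hence the \( A_{\nu,k}(q) \) themselves, carry poles precisely where a denominator of the form \( 1 - q^{\nu} \) (or one of its differentiated relatives) vanishes, that is, on loci meeting the torus \( T^r \) where \( |q^{\nu}| = 1 \). One must therefore \emph{not} integrate the character formula term-by-term against \( q^{-\mu} \); the integral is taken over the character as a single object. This is legitimate because the full sum in Theorem \ref{thm:character-formula} equals the genuine character \( \Char S^N V(\lambda)(ix) \), a finite Laurent polynomial in \( q_1,\dots,q_r \) and hence a smooth, bounded function on \( T^r \), regardless of the pole structure of the separate rational pieces. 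With this caveat understood, the corollary is an immediate consequence of Theorem \ref{thm:character-formula} and the inverse Fourier transform, and there is no further obstacle to overcome.
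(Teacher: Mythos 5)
Your proposal is correct and follows essentially the same route as the paper, whose entire proof is the one-line observation that the corollary is a direct consequence of Proposition \ref{prp:genf-multiplicities} (i.e.\ the inverse Fourier transform identity) together with Theorem \ref{thm:character-formula}. Your additional caveat --- that one must integrate the character as a single smooth object on \( T^r \) rather than term-by-term, since the individual rational pieces \( A_{\nu,k}(q) \) have poles meeting the torus --- is a sound clarification that the paper leaves implicit.
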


  \begin{proof}  
    This is a direct consequence of Proposition \ref{prp:genf-multiplicities} and Theorem
    \ref{thm:character-formula}. 
  \end{proof}  

  \begin{exl}
    In the case \( \mathfrak{g} = \mathfrak{sl}(2,\mathbb{C}) \) and the symmetric powers
    \( S^N V(2) \) of the adjoint representation we have described in Note
    \ref{nte:Iterated-pfd-Fourier-series} that, e.g. for \( N = 4 \),
    \begin{align*}
      \Char S^4 V(2)(ix) 
        = q^8 + q^6 + 2q^4 + 2q^2 + 3 + 2q^{-2} + 2q^{-4} + q^{-6} + q^{-8}
      .
    \end{align*}
    Note that \( q = e^{ix} \).
    Now, in view of Corollary \ref{cor:multiplicities-residue-type}, the multiplicity of
    the weight \(\mu=2\omega_1\) in \( S^4 V(2) \) is given by 
    \begin{align*}
      m_{\lambda,N}(\mu)
        & = m_{2,4}(2)
      \\
        \mbox{\tiny \((q=e^{ix})\)}
        & = \frac{1}{2 \pi}
	    \int\limits_{S^1}
	    q^{-2} (q^8 + q^6 + 2q^4 + 2q^2 + 3 + 2q^{-2} + 2q^{-4} + q^{-6} + q^{-8})
	    dx
      \\
        & = \frac{1}{2 \pi}
	    \int\limits_{S^1}
	    q^6 + q^4 + 2q^2 + 2q^0 + 3q^{-2} + 2q^{-4} + 2q^{-6} + q^{-8} + q^{-10}
	    dx
      \\
        & = \frac{1}{2 \pi}
	    \int\limits_{S^1} 2q^0 dx
      \\
        & = 2
    \end{align*}	    
  \end{exl}	  

  \begin{rem}
    Similar to Proposition \ref{prp:genf-multiplicities} we have a generating function for
    the weight multiplicity functions \( m_{\lambda,N}^{\Lambda} \) of the exterior powers
    \(\Lambda^NV(\lambda)\) of an irreducible representation \( V(\lambda) \).
    First, realize (see \cite[Chapter 9, §4.3]{MR2265844}) that the graded character
    of the exterior algebra of \(V(\lambda)\) is given by
    \begin{align*}
      \Char \Lambda V(\lambda)
        =
        \sum\limits_{N=0}^{\infty} z^N \Char \Lambda^NV(\lambda)
	=
	\prod\limits_{\nu \in X}
	(1+e^{\nu}z)^{m_{\lambda,1}^{\Lambda}(\nu)}
	.
    \end{align*}
    Then, again by inverse Fourier transform and the same convergence arguments we
    obtain a generating function with radius of convergence equal to \(1\), satisfying
    the identity
    \begin{align*}
      \sum\limits_{N=0}^{\infty} z^N m_{\lambda,N}^{\Lambda}(\mu)
	= \frac{1}{(2 \pi)^r} \int\limits_{T^r}
	  e^{-i \langle \mu , x \rangle} \prod\limits_{\nu \in X}
	  (1+e^{i \langle \nu,x \rangle}z)^{m_{\lambda,1}^{\Lambda}(\nu)}
	  dx
	  .
    \end{align*}  
  \end{rem}

  \begin{rem}
    For the tensor powers \( T^N V(\lambda) \) of a fixed irreducible representation 
    \(V(\lambda)\) with weight multiplicity functions \( m_{\lambda,N}^{T} \) we have
    the identity \( \Char T^N V(\lambda) = (\Char V(\lambda) )^N \) and consequently
    \begin{align*}
      \sum\limits_{N=0}^{\infty} z^N m_N^{T}(\mu)
         = &  \frac{1}{(2\pi)^r} \int\limits_{T^r} e^{-i\langle \mu,x \rangle}
	    \sum\limits_{N=0}^{\infty} z^N \Char T^N V(\lambda) (ix) dx
      \\
        = & \frac{1}{(2\pi)^r} \int\limits_{T^r} e^{-i\langle \mu,x \rangle}
	    \frac{1}{1-\Char V(\lambda)(ix)z} dx
      .
    \end{align*}  
    This constitutes a holomorphic function with radius of convergence equal
    to \(\frac{1}{\dim V(\lambda)}\).
  \end{rem}  

  \section{Connection to vector partition functions}
  \label{sec:vector-partition-functions}

  For an integral matrix \( A \in \mathbb{Z}^{(m,d)} \) with
  \( \ker (A) \cap \mathbb{R}^d_+ = \{0\}\) we define the vector partition function
  \( \phi_A  : \mathbb{Z}^m \rightarrow \mathbb{N} \) by
  \begin{align*}
    \phi_A (b) = \# \{ x \in \mathbb{N}^d : Ax=b\}
    .
  \end{align*}  
  Let \(c_1,\ldots,c_d\) denote the columns of \(A\)
  and use multiexponent notation \( z^b = z_1^{b_1} \cdots z_m^{b_m} \), \(b \in \mathbb{Z}^m\). 
  Then, as stated in \cite[Equation (1)]{arx09121131}, on
  \( \{ z \in \mathbb{C}^m : |z^{c_k}| < 1 \mbox{ for } k = 1,\ldots,d \} \)
  we have the identity                                                                             
  \begin{align*}
    f_A(z) := \sum_{b \in \mathbb{Z}^m} \phi_A (b) z^b = \prod_{k=1}^d \frac{1}{1-z^{c_k}}         
  \end{align*}
  and
  \begin{align*}
    \phi_A(b) = \mbox{const} \left[ f_A(z) \cdot z^{-b} \right]
    .
  \end{align*}
  Now, there is an obvious connection between the graded character of the symmetric algebra
  \(S V(\lambda)\) of an irreducible representation \(V(\lambda)\) of a complex semi-simple Lie algebra
  \( \mathfrak{g} \) and the theory of vector partition functions, which is given by
  Lemma \ref{lem:Molien-formula}. Namely, if \( \mathfrak{g} \) is of rank r, then one has
  a matrix \( A \in \mathbb{Z}^{(r+1,\dim V(\lambda))} \) encoding the weights of \( V(\lambda) \)
  in terms of the coordinate system given by the fundamental weights
  \( \omega_1, \ldots, \omega_r \). This information corresponds to the first \( r \) rows
  of each column of \( A \). In addition to that, we have the \( (r+1) \)-th row which
  associates to the grading given by \( z \) in Lemma \ref{lem:Molien-formula}. That is,
  our particular matrix \( A \) has the following properties
  \begin{enumerate}
    \item The last row of \( A \) equals \( (1, \ldots , 1) \). \hfill (grading)
    \item The columns of \( A \) reflect the Weyl group action. \hfill (symmetry)
    \item The columns of \( A \) appear with multiplicities. \hfill (multiplicity)
  \end{enumerate}
  In contrast to the computational and algorithmic aspects of iterated partial fraction
  decomposition as proposed in \cite{MR2096740} and continued e.g. in \cite{arx09121131}
  for ``arbitrary'' matrices \( A \), our interests are different. They lie in
  investigating further the closed character formulas for the symmetric powers and
  the impact of the grading, symmetry and multiplicity properties of our matrix
  \( A \) on the iterated partial fraction decomposition. One aspect is described
  in detail in Section \ref{sec:Weyl-group-orbits}.

  \section{Weyl group orbits and the Main Theorem}
  \label{sec:Weyl-group-orbits}
  
  In the notation of Theorem \ref{thm:character-formula} write the character of
  \( S^N V(\lambda) \) as the sum over the dominant weights and their Weyl group
  orbits, i.e.
  \begin{align*}
    \Char S^N V(\lambda)(ix)
      = \sum\limits_{\nu \in X^+} \sum\limits_{w \in W/W_{\nu}} q^{N w.\nu}
        \sum\limits_{k=1}^{m_{\lambda}(\nu)}
        A_{w.\nu,k}(q) \cdot p_k (N)
	.
  \end{align*}
  Here \(W_{\nu}\) denotes the stabilizer of the weight \(\nu\). Note
  that the multiplicity of a weight is invariant under the operation of the
  Weyl group (see e.g. \cite[Proposition 10.22]{MR2188930}). Now, for a fixed dominant weight
  \( \nu \in X^+ \) let
  \begin{align}
  \label{eq:character-dominant-weight-summand}
    f_{\nu,N}(q) 
      = \sum\limits_{w \in W/W_{\nu}} q^{N w.\nu}
        \sum\limits_{k=1}^{m_{\lambda}(\nu)}
        A_{w.\nu,k}(q) \cdot p_k (N)
	\in \mathbb{C}(q_1,\ldots,q_r)[X]
  \end{align}
  so that
  \begin{align*}
    \Char S^N V(\lambda)(ix) = \sum_{\nu \in X^+} f_{\nu,N}(q)
    .
  \end{align*}
  It is interesting to ask how the iterated partial fraction decomposition with respect
  to the variables \( q_1,\ldots,q_r \) of a single summand \( f_{\nu,N}(q) \)
  looks like. Examples indicate that this decomposition of \( f_{\nu,N}(q) \)
  does not yield information about the weights outside the convex hull of the
  Weyl group orbit \( W.(N\nu) \). Furthermore, some additional terms appear 
  which sum up to zero when taken over all dominant weights \(X^+\). We will
  illustrate this by an example in the case of \( \mathfrak{g} \) being of
  rank \(1\) to avoid confusing computations.
  \begin{exl}
    Consider the sequence of representations \( S^N V(3) \) of
    \(\mathfrak{g}=\mathfrak{sl}(2,\mathbb{C})\).
    Then, by Corollary \ref{cor:character-formula-sl2_irreducible} we have
    \begin{align*}
      \Char S^N V(3)(ix) =
        & \frac{q^{12} \cdot q^{3N}}{(q^6-1)(q^4-1)(q^2-1)} 
	  + \frac{-q^6 \cdot q^{N}}{(q^4-1)(q^2-1)^2} 
      \\
        & 
	+ \frac{q^2 \cdot q^{-N}}{(q^4-1)(q^2-1)^2} 
	  + \frac{- q^{-3N}}{(q^6-1)(q^4-1)(q^2-1)}
    \end{align*}
    where \( q = e^{ix} \). Hence, following the notation introduced in Equation
    \eqref{eq:character-dominant-weight-summand} we set
    \begin{align*}
      f_{1,N}(q) & = \frac{-q^6 \cdot q^{N}}{(q^4-1)(q^2-1)^2} 
                     + \frac{q^2 \cdot q^{-N}}{(q^4-1)(q^2-1)^2} \\
      f_{3,N}(q) & = \frac{q^{12}\cdot q^{3N}}{(q^6-1)(q^4-1)(q^2-1)}
                     + \frac{- q^{-3N}}{(q^6-1)(q^4-1)(q^2-1)}
		     .
    \end{align*}	    
    Now, e.g. for \(N=4\), we obtain
    \begin{align*}
      \PFD_q(f_{1,4}(q))
        = & -q^2 -2 -q^{-2}
      \\
          & -\frac{3}{4(q-1)^2} + \frac{3}{4(q+1)} - \frac{3}{4(q-1)} - \frac{3}{4(q+1)^2}
    \end{align*}
    and
    \begin{align*}
      \PFD_q(f_{3,4}(q))
        = & q^{12} + q^{10} + 2q^8 +3q^6 + 4q^4 +5q^2 +7 \\
	  & +5q^{-2} +4q^{-4} + 3^{-6} +2q^{-8} +q^{-10} +q^{-12} \\
          & +\frac{3}{4(q-1)^2} - \frac{3}{4(q+1)} + \frac{3}{4(q-1)} + \frac{3}{4(q+1)^2}
	  ,
    \end{align*}
    where in each individual decomposition the last four summands are the additional
    terms which sum up to zero. Unfortunately this example indicates that we cannot expect
    a positive formula for the weight multiplicities of the symmetric powers.
  \end{exl}

  \newpage
  \bibliography{character}

\begin{thebibliography}{Lan02}

\bibitem[Bec04]{MR2096740}
Matthias Beck.
\newblock The partial-fractions method for counting solutions to integral
  linear systems.
\newblock {\em Discrete Comput. Geom.}, 32(4):437--446, 2004.

\bibitem[Bli09]{arx09121131}
Thomas Bliem.
\newblock Towards computing vector partition functions by iterated partial
  fraction decomposition.
\newblock {\em Preprint}, arXiv:0912.1131v1, 2009.

\bibitem[Car05]{MR2188930}
R.~W. Carter.
\newblock {\em Lie algebras of finite and affine type}, volume~96 of {\em
  Cambridge Studies in Advanced Mathematics}.
\newblock Cambridge University Press, Cambridge, 2005.

\bibitem[EK79]{MR533571}
Dan Eustice and M.~S. Klamkin.
\newblock On the coefficients of a partial fraction decomposition.
\newblock {\em Amer. Math. Monthly}, 86(6):478--480, 1979.

\bibitem[Lan02]{MR1878556}
Serge Lang.
\newblock {\em Algebra}, volume 211 of {\em Graduate Texts in Mathematics}.
\newblock Springer-Verlag, New York, third edition, 2002.

\bibitem[Lit94]{MR1253196}
Peter Littelmann.
\newblock A {L}ittlewood-{R}ichardson rule for symmetrizable {K}ac-{M}oody
  algebras.
\newblock {\em Invent. Math.}, 116(1-3):329--346, 1994.

\bibitem[Pro07]{MR2265844}
Claudio Procesi.
\newblock {\em Lie groups}.
\newblock Universitext. Springer, New York, 2007.
\newblock An approach through invariants and representations.

\end{thebibliography}
  \bibliographystyle{alpha}	

\end{document}